\newcommand{\Alg}{\alg}
\newcommand{\alg}{\operatorname{alg}}
\newcommand{\Alex}{\operatorname{Alex}}
 \newcommand{\smMatrix}[1]{\left[\begin{smallmatrix}#1\end{smallmatrix}\right]}
 \newcommand{\FC}{\mathcal{F}}
 \newcommand{\GC}{\mathcal{G}}
\newcommand{\Cone}{\operatorname{Cone}}
\newcommand{\cfkinv}{{\rm CFKI}^\infty(K)}
\def\ff{{\mathbb F}}
\def\co{\colon\!}
\def\cs{\mathop{\#}}
\def\cala{\mathcal{A}}
\def\calt{\mathcal{T}}
\DeclareMathOperator{\cfk}{\rm CFK}
\DeclareMathOperator{\icfk}{\rm CFKI}
\DeclareMathOperator{\ihfk}{\it \rm HFKI}
\DeclareMathOperator{\hfk}{\it \rm HFK}
\DeclareMathOperator{\Max}{\operatorname{Max}}
\DeclareMathOperator{\Min}{\operatorname{Min}}
\newcommand{\spinc}{\ifmmode{{\mathfrak s}}\else{${\mathfrak s}$\ }\fi}
\newcommand{\spinct}{\ifmmode{{\mathfrak t}}\else{${\mathfrak t}$\ }\fi}
\newcommand{\spincw}{\ifmmode{{\mathfrak w}}\else{${\mathfrak w}$\ }\fi}
\def\Z{\mathbb Z}
\def\R{\mathbb R}
\def\F{\mathbb F}
\newcommand{\inv}{^{-1}}
\def\cfki{\cfk^\infty(K)}
\def\hfki{\mathrm{HFKI}}
\def\com{{\rm C}}
\def\comd{{\rm D}}
\newcommand{\op}{\operatorname{op}}
\newcommand{\fig}[2] { \includegraphics[scale=#1]{#2} }
 \def\involmap{\mathcal{I}}
\newtheorem{theorem}{Theorem}
\newtheorem{proposition}[theorem]{Proposition}
\theoremstyle{definition}
\newtheorem{definition}[theorem]{Definition}
\newtheorem{notation}[theorem]{Notation}
\begin{document}
\title[Involutive Heegaard Floer Knot Homology]{An Involutive Upsilon Knot Invariant}
\author{Matthew Hogancamp}
\author{Charles Livingston}
\address{Matt Hogancamp: Department of Mathematics, University of Southern California, Los Angeles, CA  90089}
\email{hogancam@usc.edu}
\address{Charles Livingston: Department of Mathematics, Indiana University, Bloomington, IN 47405 }
\email{livingst@indiana.edu}
\thanks{This work  was supported by a grant from the Simons Foundation and by the National Science Foundation under grant  DMS-1505586.}

\begin{abstract}  Using the theory of involutive Heegaard Floer knot theory developed by Hendricks-Manolescu, we define two involutive analogs of the Upsilon knot concordance invariant of Ozsv\'ath-Stipsicz-Szab\'o.  These involutive   invariants are  piecewise linear functions defined on the interval [0,2].  Each is a concordance invariant and  provides bounds on the three-genus of a knot.  
\end{abstract}

\maketitle
\section{Introduction}
Heegaard Floer knot theory~\cite{os_knotinvars} associates to a knot $K \subset S^3$ a chain complex $\cfk^\infty(K)$.  To be more precise, $$\cfk^\infty(K) = (\mathrm C, \partial,  \alg, \Alex),$$
where $\com$ is a  graded $\Z_2$--chain complex with boundary map $\partial$ of degree $-1$ and $\alg$ and $\Alex$ are increasing filtrations on $(\mathrm{C}, \partial)$.  Furthermore, $\mathrm C$ is a free, finitely generated $  \Z_2[U,U^{-1}]$--module; the  action by $U$ commutes with $\partial$, lowers gradings by $2$ and lowers both filtration levels by 1.  The construction of $\cfk^\infty(K)$ depends on a series of choices, but any two complexes  associated to $K$ are bifiltered chain homotopy equivalent.  

Two further structural properties of $\cfk^\infty(K)$ have  been discovered: Sarkar~\cite{sarkar} described a naturally defined self-chain homotopy equivalence $s$, now called the {\it Sarkar map}, and  Hendricks and Manolescu~\cite{hm}  used the existence of  a \emph{skew-bifiltered} chain homotopy equivalence, first constructed in~\cite{os_knotinvars}, $$\involmap \co  (\mathrm C, \partial ) \to  (\mathrm C, \partial ),$$  to define a family of new invariants called  the involutive homology groups,  $\ihfk^\circ(K)$, where $\circ =\infty,  +, -, $ or $\widehat{\ \ }$.  Here, by {\it skew-bifiltered}  we mean that $\involmap$ switches algebraic and Alexander filtration levels.  That  $\ihfk^\circ(K)$  is well-defined depends on the naturality of $\involmap$, which follows from results of Juhasz-Thurston~\cite{jt}.  Hendricks and Manolescu also proved that $\involmap^2 $ is bifiltered chain homotopic to $ s$.  

The group $\ihfk^\circ(K)$ is the homology of the mapping cone of $\involmap + I$.  
In Section~\ref{sec:involve}    we will review the construction of this mapping cone,  $\icfk^\infty(K)$. We will also  describe a bifiltration on $\icfk^\infty(K)$.  Section~\ref{sec:reduce}  describes  the computation of $\icfk^\infty(K)$ for the torus knot $K=T(3,7)$  and presents a generalization that applies to all torus knots  or, more generally, to $L$--space knots and their mirror images.  In Section~\ref{sec:upsilon} we describe how the Upsilon function associated to $\cfk^\infty(K)$, defined in~\cite{oss}, can be extended to give a pair of what we call the  {\it involutive Upsilon} functions.   Section~\ref{sec:t37} focuses on a single example, the torus knot $T(3,7)$.  In Section~\ref{sec:concordance} we prove the concordance invariance of the involutive  Upsilon functions.   We show in Section~\ref{sec:v0} that the value of each Upsilon at $t=0$ is determined by previously defined invariants, $\overline{V}_0 (K)$ and  $\underline{V}_0 (K)$.  Section~\ref{sec:three-genus} briefly discusses the three-genus of knots.

\subsection{Conventions}
Throughout this paper, all complexes are $\Z$--graded chain complexes over the field $\Z/2$, henceforth denoted $\F$.  All differentials will lower homological degree by 1.  The word \emph{grading} is synonymous with \emph{homological degree}.

Recall that if $(I,\leq)$ is a partially ordered set, then an $I$--filtered complex is a complex $\com$ together with a collection of subcomplexes $\mathcal{F}_i(\com)\subset \com$, indexed by $i\in I$, with $\mathcal{F}_i(\com)\subseteq \mathcal{F}_j(\com)$ whenever $i\leq j$.   We will always assume that
\begin{equation}\label{eq:filtrationCondition}
\bigcap_i \FC_i(\com)=\{0\} \ \ \ \ \ \ \ \ \text{ and } \ \ \ \ \ \ \ \ \ \  \bigcup_i\FC_i(\com) = \com.
\end{equation}
A homotopy equivalence between $I$--filtered complexes $\com\rightarrow \com'$ is said to be an $I$--filtered homotopy equivalence if all relevant maps (the chain map  
$\com \to  \com'$, its chain homotopy inverse $\com' \to \com$, and the chain homotopy) are filtration preserving.

By default, \emph{filtered complex} means $\Z$--filtered and \emph{bifiltered} means $\Z\times\Z$--filtered.  If $(\com,\FC)$ is a filtered complex, we may define the \emph{filtration degree} of a nonzero $x\in \com$ by
\begin{equation}\label{eq:filtrationDegree}
\deg_\FC(x) = \min\{i \ | \ x\in \FC_i(\com)\}.
\end{equation}
By convention, we set $\deg_\FC(0)=-\infty$.   Thus, given the assumptions \eqref{eq:filtrationCondition}, $\deg_\FC$ is a well-defined set function $\com\rightarrow \Z\cup\{-\infty\}$.  The function $\deg_\FC$ satisfies
\begin{itemize}
\item[(F1)] $\deg_\FC\inv(-\infty)=\{0\}$,
\item[(F2)] $\deg_\FC(x+y)\leq \max\{\deg_\FC(x),\deg_\FC(y)\}$, and
\item[(F3)] $\deg_\FC(\partial(x))\leq \deg_\FC(x)$.
\end{itemize} 
Conversely, given a set function with these properties, we can recover the subcomplex $\FC_i(\com)$ as the linear span of all elements $x\in \com$ such that $\deg_\FC(x)\leq i$.  In fact, we can allow more general functions $\deg_\FC$.  If $\deg_\FC\co \com \rightarrow \R$ is a set function satisfying the properties (F1), (F2), and (F3), then we can define an $\R$--filtration of $\com$, which is the associated ordered family of subcomplexes $\FC_t(\com)\subset \com$, indexed by $t\in \R$.  In all examples of interest to us, the image of $\com\setminus\{0\}$ under $\deg_\FC$ will be a discrete subset $S\subset \R$ which, as an ordered set,  is isomorphic to  $\Z$.  Thus, every $\R$--filtered complex considered here can also regarded as a $\Z$--filtered complex, though the precise description would require choosing an isomorphism $S\cong \Z$.

\begin{notation}
Henceforth, a filtered complex will mean a pair $(\com,\deg_\FC)$, where $\com$ is a chain complex with  differential $\partial$, and $\deg_\FC$ is a function $\com\rightarrow \R\cup \{-\infty\}$ satisfying properties (F1), (F2), and (F3) above.  Similarly, a \emph{bifiltered complex} is a triple $(\com,\deg_\FC, \deg_\GC)$ such that $(\com,\deg_\FC)$ and $(\com,\deg_\GC)$ are filtered complexes.  The reader can verify that a bifiltered complex in this sense corresponds to a filtration indexed by $\Z \times \Z$.
\end{notation}
\vskip.05in

\noindent{\it Acknowledgments}  We thank Kristen Hendricks and Jen Hom for helpful comments.

\section{Involutive homology}\label{sec:involve}
We begin by reviewing the definition of the mapping cone complex in the  context of the chain map $\involmap  + I$; we denote this complex $\Cone(\mathrm{\com}, \involmap +I)$.  The underlying graded vector space  is $\cfki[1] \oplus \cfki$, where $\cfki[1] $ is the same complex as $\cfki$ with gradings shifted up by 1.  The  boundary map is given by 
$$
\partial_\involmap =
\smMatrix{\partial_\com & 0 \\ \involmap+I & \partial} \colon \ \cfki[1] \oplus \cfki  \longrightarrow \cfki[1]\oplus \cfki.
$$
In other words, the boundary of $(z_1,z_2)$ is $(\partial(z_1),\involmap(z_1) + z_1+ \partial(z_2))$.  It is easily checked that  $(\partial_\involmap)^2 = 0$. 

\begin{definition}  We denote  $\Cone(\com, \involmap +I)$ by $\icfk^\infty(K)$.\end{definition}

\begin{theorem}The homology of  $\icfk^\infty(K)$, $\ihfk^\infty(K)$,  is isomorphic as an $\F[U,U^{-1}]$--module to $\ff[U,U^{-1}] \oplus   \ff[U,U^{-1}]$, where $(1,0)$ has grading 1 and $(0,1)$ has grading $0$.  This splitting is natural. 
\end{theorem}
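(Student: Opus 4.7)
The proof proceeds via the long exact sequence of the mapping cone. The definition $\icfk^\infty(K) = \Cone(\involmap + I)$ produces a short exact sequence of chain complexes
$$0 \to \cfki \longrightarrow \icfk^\infty(K) \longrightarrow \cfki[1] \to 0,$$
given by the inclusion $z_2 \mapsto (0,z_2)$ and the projection $(z_1,z_2) \mapsto z_1$, with connecting homomorphism induced by $\involmap + I$. Since $H_n(\cfki[1]) = H_{n-1}(\cfki)$, the resulting long exact sequence reads
$$\cdots \to H_n(\cfki) \to H_n(\icfk^\infty(K)) \to H_{n-1}(\cfki) \xrightarrow{(\involmap + I)_*} H_{n-1}(\cfki) \to \cdots.$$

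The key input is that $H_*(\cfki) \cong \F[U,U^{-1}]$ as a graded $\F[U,U^{-1}]$-module, with generator in grading $0$; this is a restatement of $\hf^\infty(S^3) \cong \F[U,U^{-1}]$. Since $\involmap_*$ is a grading-preserving $\F[U,U^{-1}]$-module endomorphism of a rank-one free module, it is multiplication by a scalar in $\F = \F_2$. The relation that $\involmap^2$ is chain homotopic to $s$, combined with the fact that the Sarkar map $s$ acts as the identity on $\hf^\infty$, forces this scalar to be $1$, so $\involmap_* = I$. Working in characteristic $2$, the connecting map $(\involmap + I)_* = I + I$ vanishes, and the long exact sequence breaks into short exact sequences
$$0 \to H_n(\cfki) \to H_n(\icfk^\infty(K)) \to H_{n-1}(\cfki) \to 0.$$

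Because the outer terms are free $\F[U,U^{-1}]$-modules and $\F[U,U^{-1}]$ is a PID, each such sequence splits as $\F[U,U^{-1}]$-modules, yielding
$$\ihfk^\infty(K) \cong H_*(\cfki) \oplus H_{*-1}(\cfki) \cong \F[U,U^{-1}] \oplus \F[U,U^{-1}],$$
with the $H_*(\cfki)$ summand contributing a generator in grading $0$ and the $\cfki[1]$ summand, after the degree shift, contributing a generator in grading $1$, as claimed. Naturality of the splitting follows from the naturality of each ingredient, given the naturality of $\involmap$ established in~\cite{hm,jt}. The subtlest step is the identification $\involmap_* = I$ (rather than $\involmap_* = 0$) on $H_*(\cfki)$, which relies on the fact that the Sarkar map acts as the identity on $\hf^\infty(S^3)$.
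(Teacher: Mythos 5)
Your proposal is correct and follows essentially the same route as the paper: the long exact sequence of the mapping cone, the vanishing of $(\involmap+I)_*$ on homology in characteristic $2$, and the identification $H_*(\cfki)\cong\F[U,U^{-1}]$ with generator in grading $0$. The paper simply asserts that $(\involmap+I)_*$ is trivial where you supply the justification via the Sarkar map (one could also note directly that $\involmap$ is a chain homotopy equivalence, so the grading-preserving scalar it induces on a rank-one free module over $\F_2[U,U^{-1}]$ must be $1$), but this is a matter of detail, not of approach.
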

\begin{proof}  For the mapping cone of any map of complexes $f\co \com \to \comd$, there is a long exact sequence $$\cdots \to {\rm H}_i(\com) \xrightarrow{f_*} H_i(\comd) \to  {\rm H}_i(\text{Cone}(\com,\comd,f)) \to  {\rm H}_{i-1}(\com)  \xrightarrow{f_*} \cdots .$$  The map $(\involmap + I)_*$ is trivial on homology and $ {\rm H}_i(\com) \cong  {\rm H}_i(\comd) \cong \F[U, U^{-1}]$ with the generator 1 of grading 0.   
\end{proof}

\begin{definition}  $\ihfk^\infty(K) \cong \calt_{o} \oplus \calt_{e}$, 
where $\calt_{o}$ is the {\it odd tower} isomorphic to $\F[U,U^{-1}] $ with all gradings odd,  and
$\calt_{e} $ is similarly the {\it even tower}.
\end{definition}

\subsection{The folded bifiltration}  
The map $\involmap$ is skew; it does not preserve the algebraic-Alexander bifiltration.  However, there is  a pair of filtrations on $\icfk^\infty(K) $ that are preserved.  

\begin{definition}\label{def:folding}
Suppose $(\com,\deg_\FC,\deg_\GC)$ is a bifiltered complex. Define a new bifiltered complex $(\com,\Min, \Max)$, where $\Min(x)= \min\{\deg_\FC(x),\deg_\GC(x)\}$ and $\Max(x)=\max\{\deg_\FC(x),\deg_\GC(x)\}$.  We call the resulting bifiltration the \emph{folded}, or \emph{Min-Max bifiltration on $\com$}.
\end{definition}

We may regard $\cfki $ as a bifiltered complex with respect to the Min-Max filtration.  Note that $\involmap\colon \cfki\rightarrow \cfki$ swaps the $\Alg$ and $\Alex$ filtrations, and hence it preserves the Min-Max filtrations.  Thus, $(\cfkinv, \Min, \Max)$ is a bifiltered complex.

The left diagram in Figure~\ref{fig37invol2} illustrates a {\it model} complex for   $\cfk^\infty(T(3,7))$; in general, the structure of $\cfk^\infty(T(p,q))$ is determined by the results of~\cite{os_l_space}, which study a more general family of knots, called {\it $L$--space knots}. In the diagram, the vertex at $(0,6)$ represents a generator at grading 0.  The full complex is constructed from the illustrated model complex by tensoring with $\F[U,U^{-1}]$; the $U^k$ translates, if illustrated, would be represented by copies of the finite complex that is drawn, shifted $-k$ units along the main diagonal.  It is evident that the only self-chain homotopy equivalence that is also skew is represented by reflection through the diagonal.   On the right in Figure~\ref{fig37invol2} the complex $\cfk^\infty(T(3,7))$ is illustrated, where now the  bifiltration is given by   Min-Max.   Figure \ref{xyz} illustrates the involutive complex $\cfkinv$ for $T(3,7)$  as well as an equivalent reduced complex obtained by {\it bifiltered Gaussian elimination}.  In the next section we describe the steps in constructing this reduction.

\begin{figure}[h]
\fig{.4}{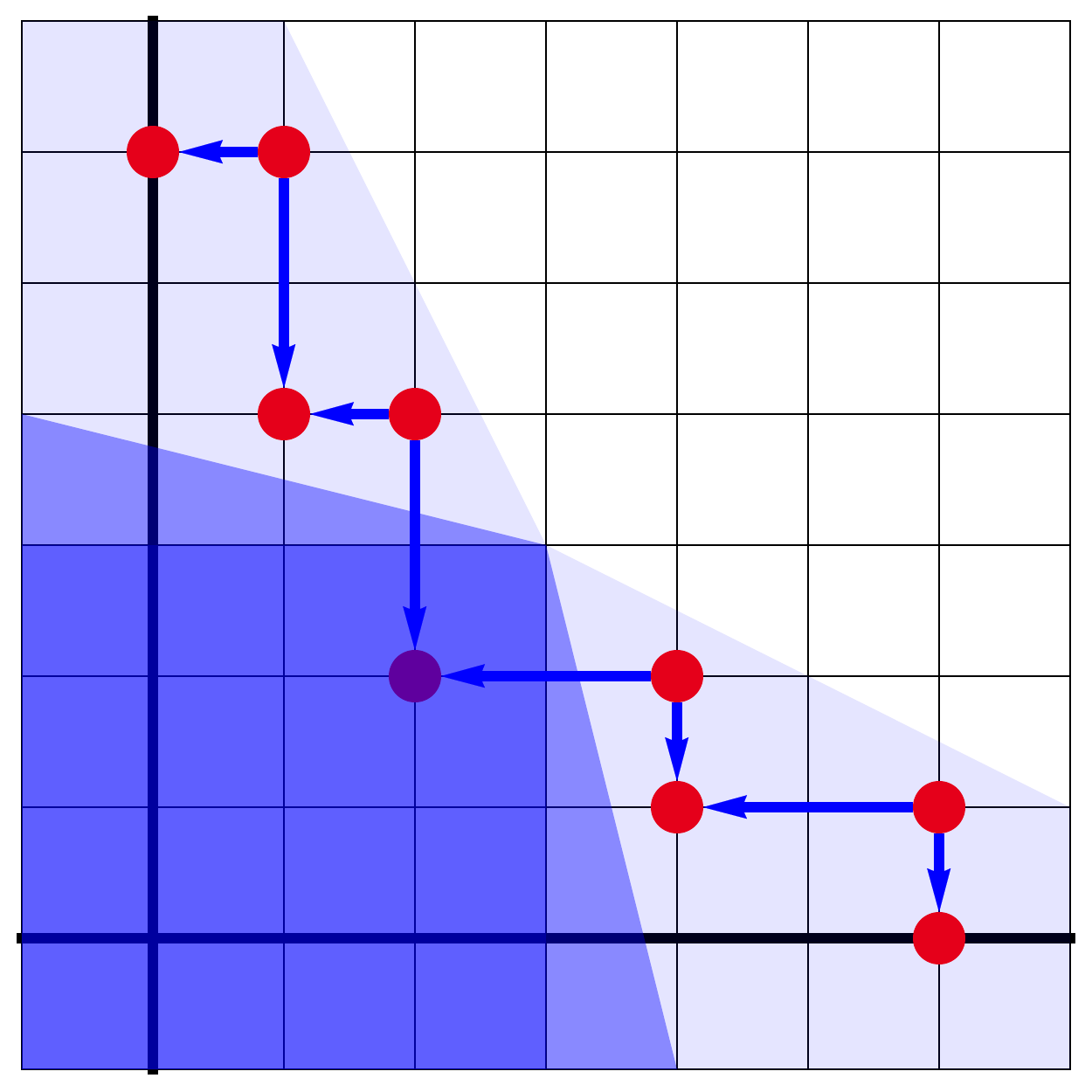} \hskip.4in  \fig{.545}{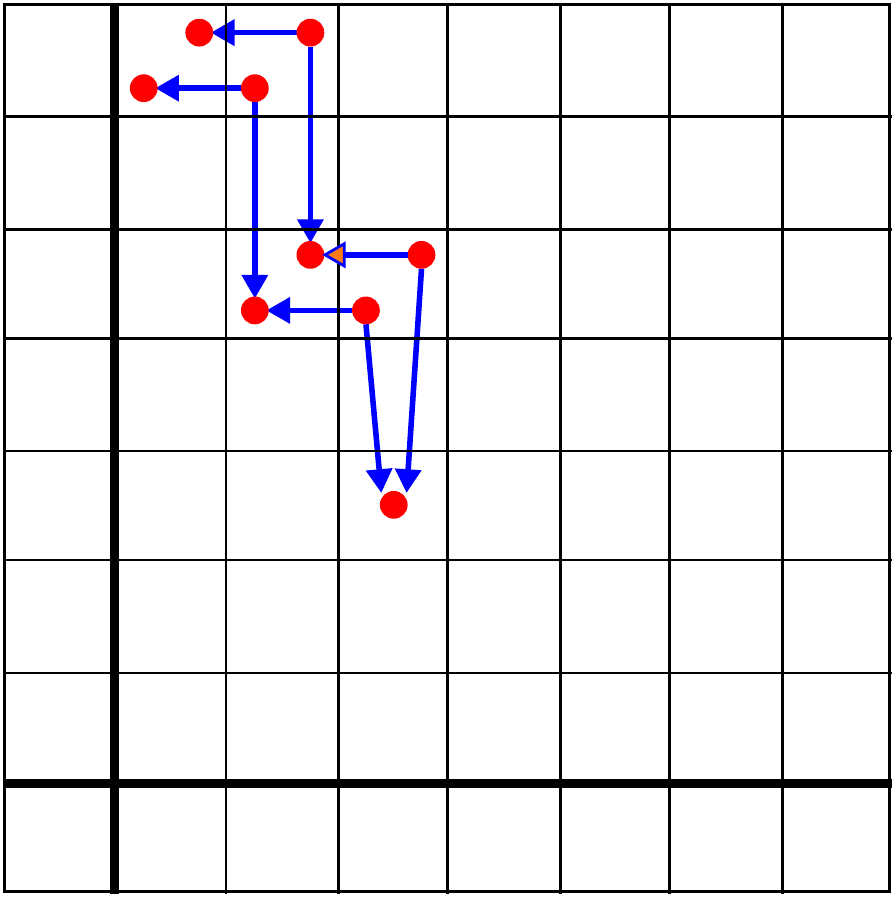}
\caption{$\cfk^\infty(T(3,7))$ and folded $\cfk^\infty(T(3,7))$}
\label{fig37invol2}
\end{figure}

\begin{figure}[h]\fig{.4}{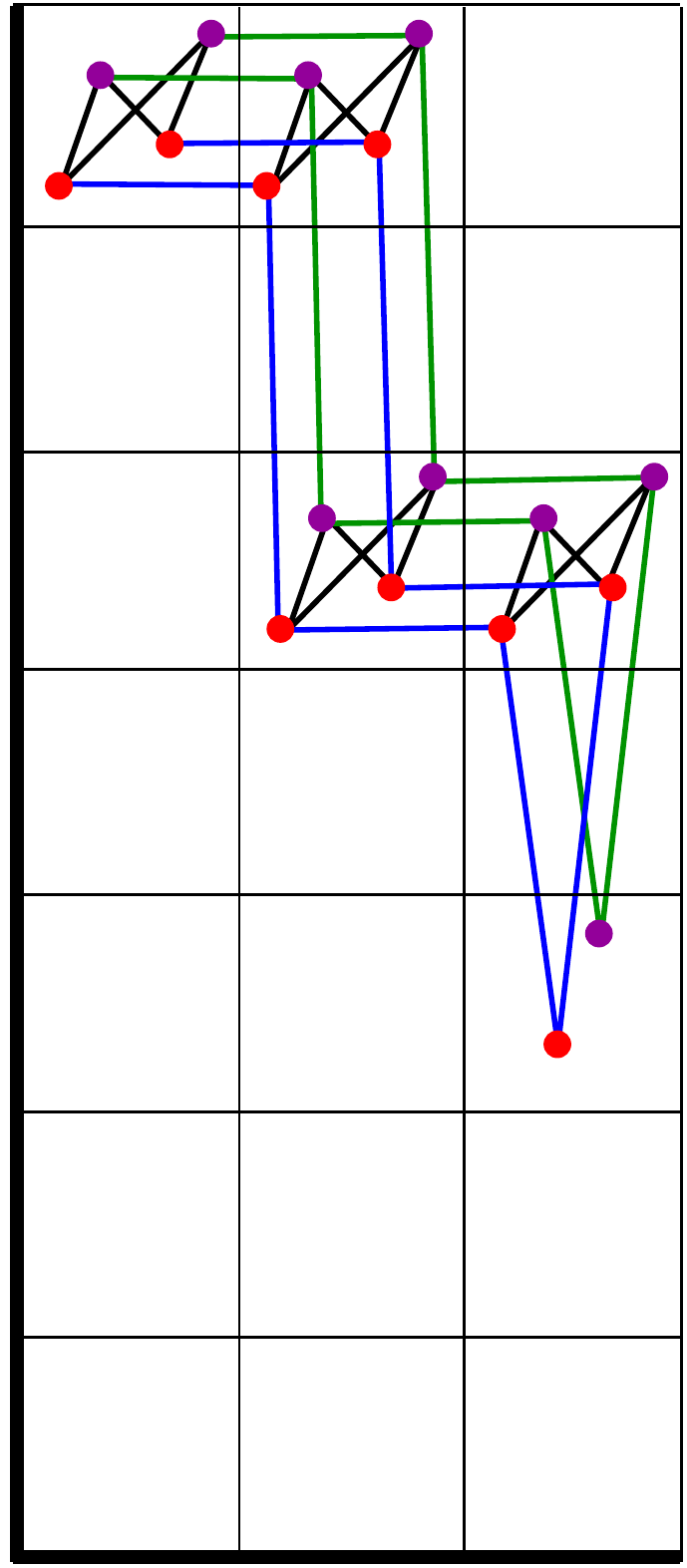} \hskip.5in \fig{.4}{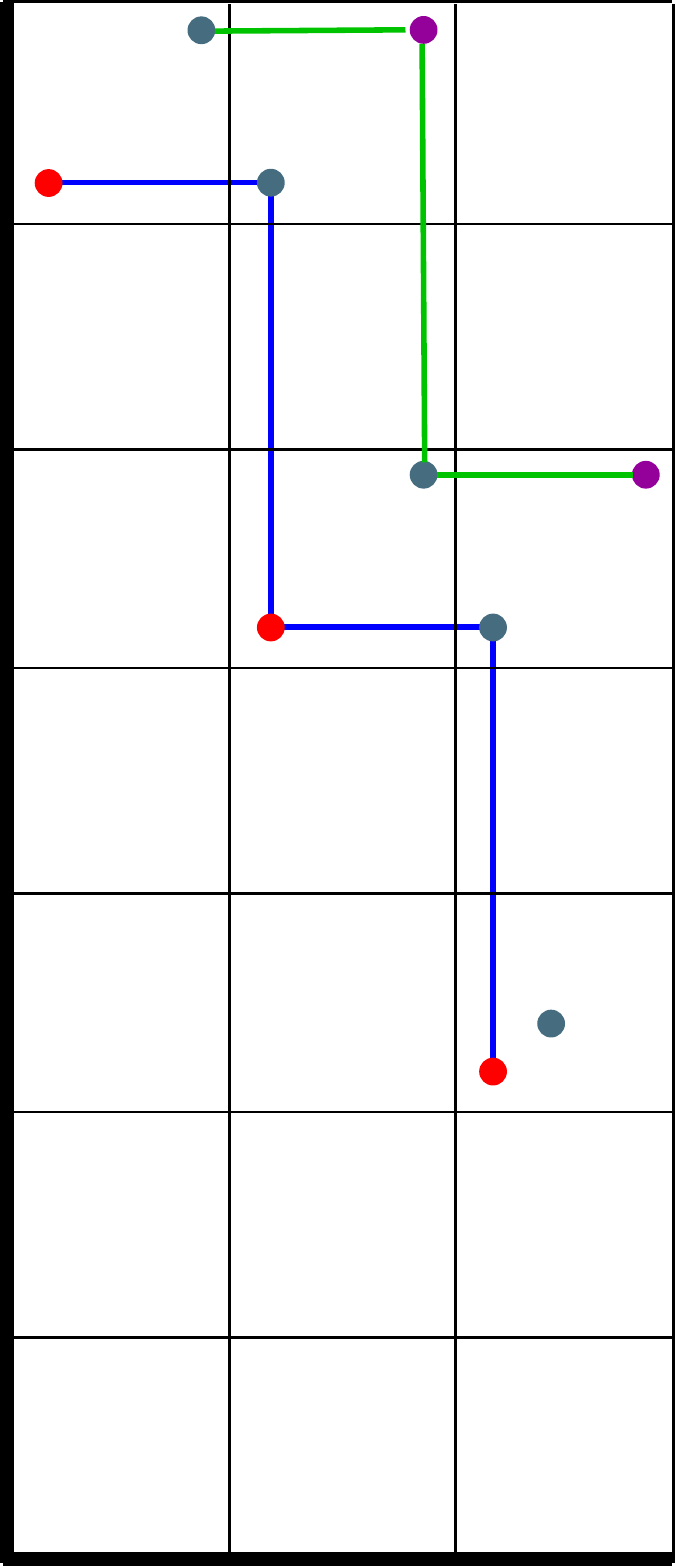}\caption{$\icfk(T(3,7))$ and  its reduction.}\label{xyz} \end{figure}

\section{Reductions}\label{sec:reduce}

Suppose $(I,\leq)$ is a partially ordered set, and let $(\com,\FC)$ be a filtered complex.  We say that $\com$ is \emph{reduced} if each subquotient $\FC_{\leq i}(\com)/\FC_{<i}(\com)$ has zero differential.  Under mild finiteness assumptions on $I$ (which are satsified by $I=\Z$ and $I=\Z\times \Z$), any $I$--filtered complex is isomorphic to $\com_{\text{red}}\oplus Z$ where $\com_{\text{red}}$ is reduced and $Z$ is homotopically trivial in the filtered sense.  An algorithm for reducing bifiltered complexes is presented in~\cite{rasmussen}.  In this section we will summarize the procedure in the case that the starting complex is the involutive complex associated to a staircase, such as the one illustrated on  the left of Figure~\ref{fig37invol2}. In this example there are 9 vertices  and the steps are $[1,2,1,2,2,1,2,1]$.  The goal is to perform a bifiltered change of basis so that, after removing acyclic summands, the remaining diagram has no arrows within any given square.    We first introduce some terminology.

\begin{definition} A staircase complex $\com$ is \emph{symmetric} if $\com\simeq \com^{\text{op}}$, where $\com^{\op}$ denotes the staircase with bifiltrations swapped.  Any symmetric staircase $\com$ has steps of size $[a_1,\ldots,a_{2k}]$ with $a_i=a_{2k-i+1}$.  A symmetric staircase is \emph{inward-pointing} if the generator corresponding to the central dot is a cycle, and is \emph{outward-pointing} otherwise.  We call a staircase \emph{positive} if the first (top) step is to the right, not down, and \emph{negative} otherwise.  
\end{definition}

Note that every symmetric staircase has an even number of edges.  A positive  symmetric staircase is inward-pointing exactly when its length is 0 (mod 4), and a negative  symmetric staircase is inward-pointing exactly when its length is 2 (mod 4).  

\vskip.05in
\noindent {\bf Note:}  Positive torus knots,  $T(p,q)$ with $p, q >0$, have positive  symmetric  staircase complexes, while their mirror images, $-T(p,q)$, have negative staircase complexes.  More generally, all $L$--space knots have positive staircase complexes.\vskip.05in  

The appropriate change of basis is best described by a schematic, as in Figure~\ref{schema}.  
\begin{figure}[h]
\fig{.7}{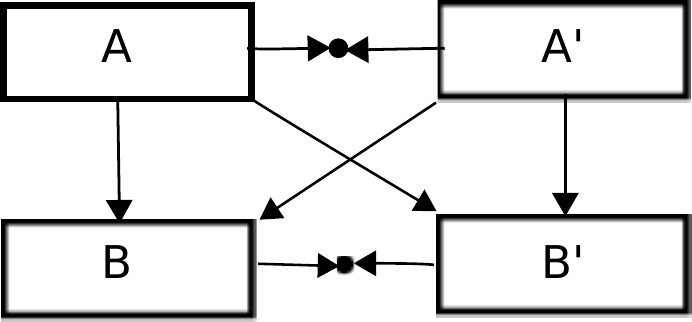}\hskip.7in  \fig{.7}{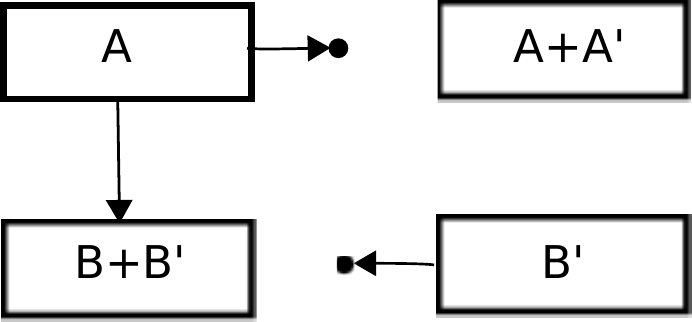}
\caption{Schematics for  $\icfk^\infty(T(3,7))$}
\label{schema}
\end{figure}
The diagram on the left represents the complex $\icfk^\infty(T(3,7))$.  The box labeled $B$ is the portion of the staircase complex that arises from the portion of the staircase with $i <j$, and $B'$ is the portion with $i >j$.  The boxes $A$ and $A'$ represent the same complexes, with grading shifted up by one.

A change of basis in which generators from $A$ are added to corresponding generators of $A'$, and generators of $B'$ are added to corresponding generators of $B$, changes the diagram so that it appears as on the right in Figure~\ref{schema}. 
The complexes $A + A'$ and $\bullet  \leftarrow B'$ are each staircase complexes; the one with an  even number of vertices is acyclic and the other has homology of rank one.   (In the current example, $A+A'$ and $B$ both have four vertices, so it is $\bullet  \leftarrow B'$ that is not acyclic.)
The complex involving $A$ and $B + B'$ is illustrated schematically on the left in Figure~\ref{schema2}.  Changing basis, adding some of the generators from the top row to those of the bottom row, as indicated in the diagram, offers the reduction as illustrated on the right in Figure~\ref{schema2}.

\begin{figure}[h]
\fig{.7}{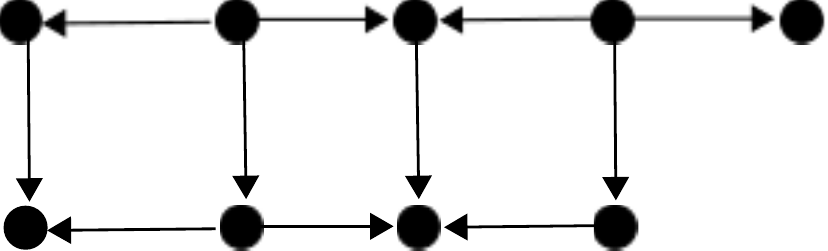}\hskip.7in  \fig{.7}{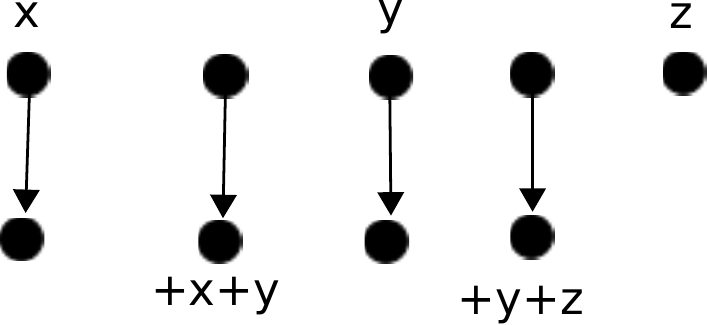}
\caption{Schematic reduction of complex.}
\label{schema2}\end{figure} 

We now see that $\icfk^\infty(T(3,7))$ is bifiltered chain homotopy equivalent to the complex illustrated in Figure~\ref{xyz}.  In the diagram, the staircase with four vertices is acyclic, and thus it doesn't contribute to later computations.  Henceforth, we will not include such acyclic pieces in our diagrams.

In considering a general symmetric staircase complex, there are two parity issues: the length of the staircase (mod 4) and whether the staircase is positive or negative.  The following theorem summarizes the result in all cases.  The proof in each case follows the exact same lines as the computations above.   

\begin{theorem}  Let $\mathrm{\com}$ be a staircase complex $[a_1, a_2, \ldots a_k, a_k, \ldots , a_1]$,  let $s = \sum a_i$ and let $d = \sum_{i \text{\ odd\ },\  i \le k} |a_i|$.  The involutive complex associated to $\mathrm{C}$ is the direct sum of two complexes, one represented by a single vertex $v_0$ on the diagonal and the other a staircase complex $\mathrm{S}$.  

\begin{itemize} 
\item If the staircase $\mathrm{C}$ is positive and $k$ is even, then $v_0$ has grading level 1 and is at filtration level $(d,d)$;  $\mathrm{S}$ is the staircase $[a_1, \ldots, a_k]$ with homology at grading 0, beginning at   filtration level  $(0,s)$ and ending at $(d,d)$. \vskip.05in    

\item If the staircase $\mathrm{C}$ is positive and $k$ is odd, then $v_0$ has grading level 1 and is at filtration level $(d,d)$;  $\mathrm{S}$ is the staircase $[a_{1}, \ldots, a_{k-1}]$ with homology at grading 0, beginning at filtration level $(0,s)$.\vskip.05in   

\item If the staircase $\mathrm{C}$ is negative and $k$ is even, then $v_0$ has grading level 0 and is at filtration level $(-d,-d)$;  $\mathrm{S}$ is the staircase $[a_1, \ldots, a_k]$ with homology at grading 1, beginning at the  filtration level  $(-s,0)$ and ending at $(-d,-d)$. \vskip.05in

\item If the staircase $\mathrm{C}$ is negative and $k$ is odd, then $v_0$ has grading level 0 and is at filtration level $(-d,-d)$;  $\mathrm{S}$ is the staircase $[a_{1}, \ldots, a_{k-1}]$ with homology at grading 1, beginning at filtration level $(-s,0)$.\vskip.05in

\end{itemize}

\end{theorem}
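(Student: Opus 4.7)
The plan is to generalize the bifiltered Gaussian elimination performed for $T(3,7)$ in the preceding discussion to an arbitrary symmetric staircase $\com = [a_1,\ldots,a_k,a_k,\ldots,a_1]$, running the argument in parallel across the four parity cases. The symmetry of the staircase means that $\involmap$ swaps the portion $B$ of $\com$ strictly above the main diagonal with the portion $B'$ strictly below, and either fixes or swaps the central generator(s) depending on the parity of $k$. Accordingly, in the mapping cone $\icfk^\infty = \com[1]\oplus \com$ we obtain a decomposition into the four blocks $A, A', B, B'$ (the first two being the grading-shifted copies of $B, B'$), together with a small number of central vertices; the map $\involmap + I$ carries $A\to B\oplus B'$ and $A'\to B'\oplus B$ as identity plus swap.

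First I would perform the schematic change of basis of Figures \ref{schema} and \ref{schema2}: add each generator of $A$ to its $\involmap$-image in $A'$, and add each generator of $B'$ to its $\involmap$-preimage in $B$.  Since $\involmap$ preserves the Min-Max bifiltration, these operations are bifiltered and produce an isomorphic bifiltered complex.  After the change of basis, the cross terms $A\to B'$ and $A'\to B$ have been absorbed into the identity part of $\involmap+I$, and the resulting complex splits as a direct sum of two pieces, each structured as a staircase with possibly one additional edge attached at its tip where it meets the diagonal.  A staircase with an even number of edges is acyclic, while one with an odd number has rank-one homology, so the parity of $k$ determines which piece survives up to acyclic cancellation.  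In every case, the result is a single vertex $v_0$ together with a staircase $S$.

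The remaining task is to read off the filtration level and grading of $v_0$ and the endpoints of $S$ in each case.  The generator $v_0$ sits at the $\involmap$-fixed diagonal point, which for a positive staircase is $(d,d)$ with $d = \sum_{i\text{ odd},\, i\leq k}|a_i|$, and for a negative staircase is $(-d,-d)$.  Its grading is $1$ in the positive cases (where $v_0$ descends from the shifted top copy $\com[1]$) and $0$ in the negative cases (where $v_0$ descends from the bottom copy $\com$), a dichotomy that can be verified by inspecting which of the two post-cancellation summands retains the diagonal generator.  The surviving staircase $S$ is $[a_1,\ldots,a_k]$ when $k$ is even and $[a_1,\ldots,a_{k-1}]$ when $k$ is odd, with endpoints at $(0,s)$ and $(d,d)$ for positive staircases, and at $(-s,0)$ and $(-d,-d)$ for negative staircases.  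The main obstacle is not conceptual but bookkeeping: one must verify, case by case, that the change of basis is bifiltered, that the correct summand is acyclic, and that the filtration coordinates work out as stated.  No new ideas beyond those used in the $T(3,7)$ computation enter the argument.
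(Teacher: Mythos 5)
Your proposal takes essentially the same route as the paper: the paper's proof consists precisely of the schematic change of basis of Figures~\ref{schema} and~\ref{schema2} (adding the generators of $A$ to those of $A'$ and those of $B'$ to those of $B$), followed by the observation that one of the two resulting staircase summands is acyclic, with the four parity cases checked by the same bookkeeping you describe. One small slip worth fixing: a staircase complex is acyclic when it has an even number of \emph{vertices} (equivalently an odd number of edges), not an even number of edges, though the case-by-case conclusions you draw from this are nevertheless the correct ones.
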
  

Figures~ \ref{fig25invol2}, \ref{fig27invol2},  \ref{figneg25invol2}  and \ref{figneg27invol2} illustrate each case.  (In the colored version of this paper,  the green, red, gray, and purple  dots represent generators of homological degree $-1$,  $0, 1 $ and $2$, respectively.) 

\begin{figure}[h]
\fig{.4}{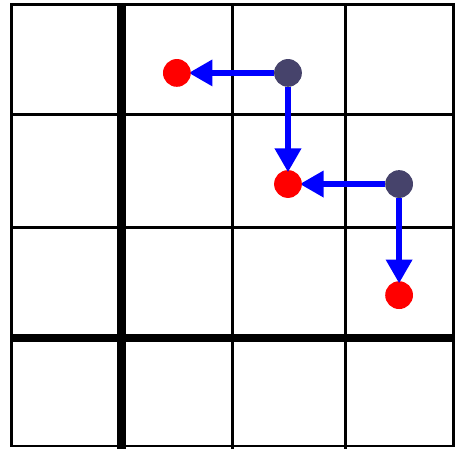}  \hskip.2in \fig{.4}{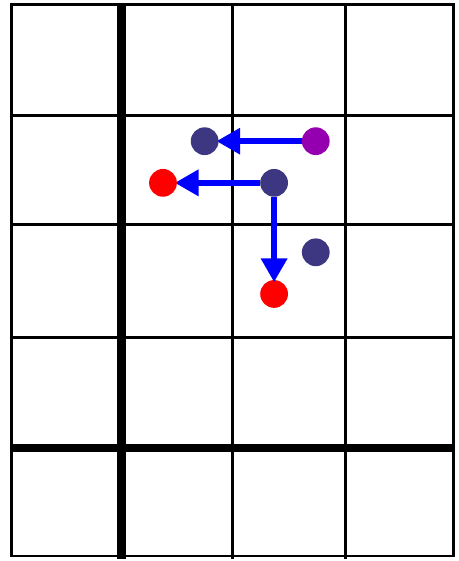}  
\caption{$\cfk^\infty(T(2,5)) $ and $  \icfk^\infty(T(2,5))$: Postive, Even}
\label{fig25invol2}
\end{figure}
\begin{figure}[h]
\fig{.4}{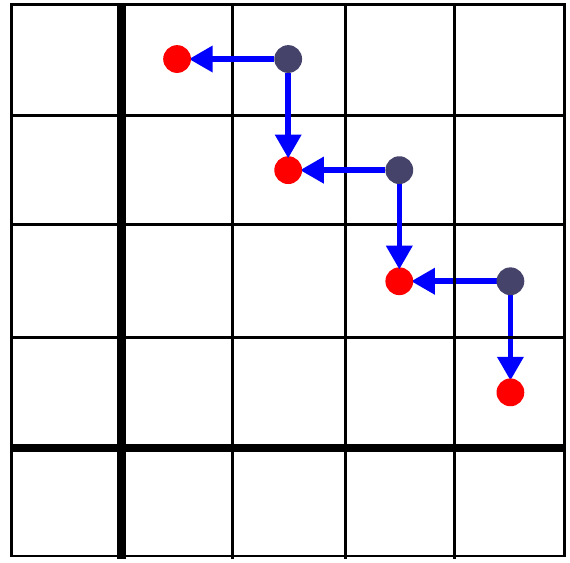}  \hskip.2in \fig{.4}{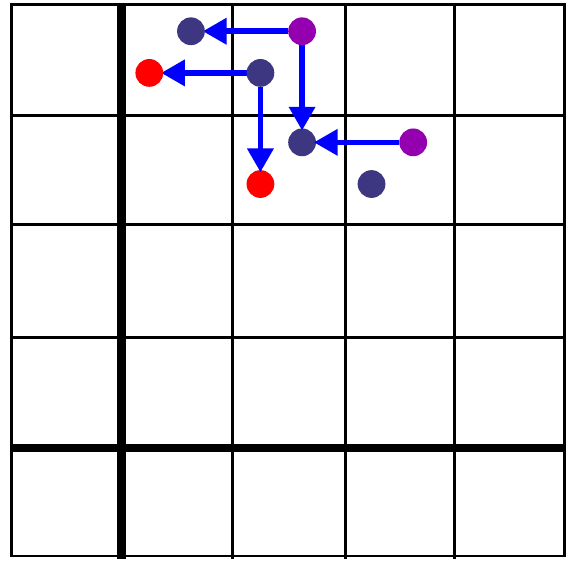}  
\caption{$\cfk^\infty(T(2,7)) $ and $  \icfk^\infty(T(2,7))$:   Postive, Odd}
\label{fig27invol2}
\end{figure}

\begin{figure}[h]
\fig{.4}{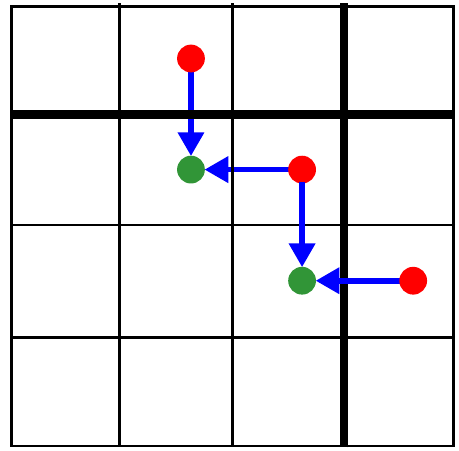}  \hskip.2in \fig{.4}{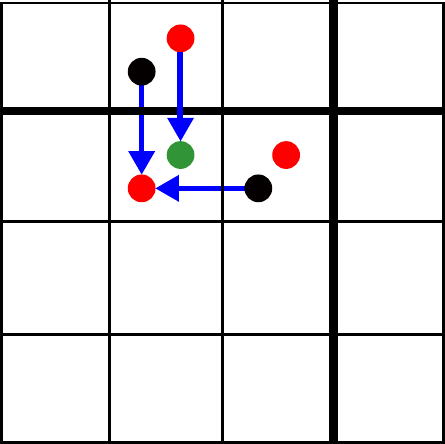}  
\caption{$\cfk^\infty(-T(2,5))$ and $\icfk^\infty(-T(2,5))$: Negative, Even}
\label{figneg25invol2}
\end{figure}

\begin{figure}[h]
\fig{.4}{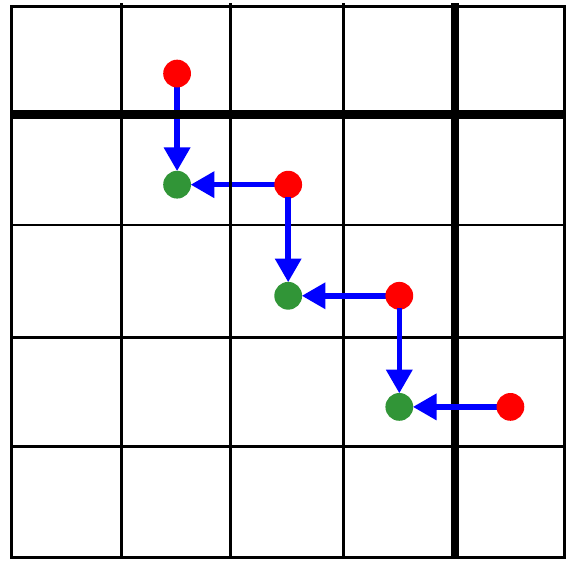}  \hskip.2in \fig{.4}{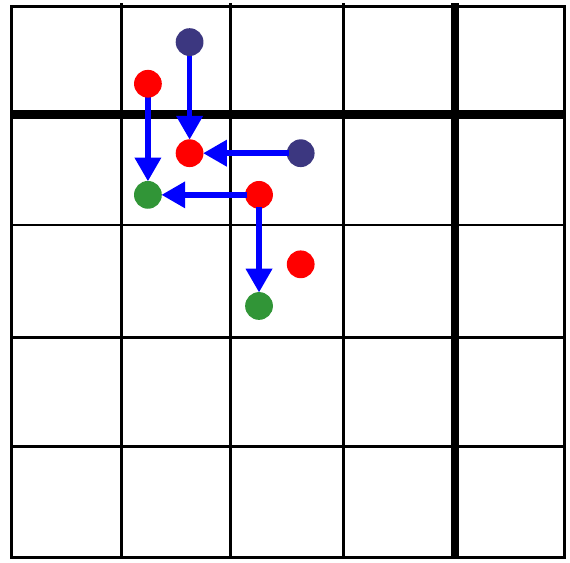}  
\caption{$\cfk^\infty(-T(2,7))$ and $\icfk^\infty(-T(2,7))$: Negative, Odd}
\label{figneg27invol2}
\end{figure}

\section{Upsilon}\label{sec:upsilon}  

In summary we have the following general result.

\begin{theorem} The complexes $\cfki$ and  $\icfk^\infty(K)$ are bifiltered by $\Max$ and $\Min$.   The homology group $H_*(\cfk^\infty(K))$ is isomorphic to a tower   $\calt_{E}$ of   even homological grading.
The homology group $H_*(\icfk^\infty(K))$ is isomorphic to a direct sum of towers $\calt_{O}$ and $\calt_{E}$ of odd and even homological grading, respectively.
\end{theorem}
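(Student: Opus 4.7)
The plan is to verify the bifiltration claim by checking that the differential of $\icfk^\infty(K)$ preserves the $(\Min, \Max)$ bifiltration; the homological assertions then follow from the earlier theorem on $\ihfk^\infty(K)$ together with the standard computation of $H_*(\cfki)$.

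First I would dispense with the filtration claim for $\cfki$. Since $(\Alg,\Alex)$ is a bifiltration and $\Min$, $\Max$ are the pointwise minimum and maximum of two functions satisfying (F1)--(F3), they also satisfy (F1)--(F3); hence the associated subcomplexes form a $\Z\times\Z$--filtration on $\cfki$. For $\icfk^\infty(K) = \cfki[1] \oplus \cfki$, I would extend $(\Min,\Max)$ componentwise (the grading shift does not affect filtration values), and then verify that each entry of the mapping cone differential
\[
\partial_\involmap = \smMatrix{\partial_\com & 0 \\ \involmap + I & \partial}
\]
is $(\Min,\Max)$--filtered. The diagonal entries $\partial_\com$ and $\partial$ preserve $\Alg$ and $\Alex$ individually, hence preserve their min and max. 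The map $\involmap$ is skew, i.e.\ it swaps $\Alg$ and $\Alex$, so for every $x$ we have $\Min(\involmap x) = \Min(x)$ and $\Max(\involmap x) = \Max(x)$; the identity obviously preserves both. Thus $\partial_\involmap$ descends to each $(\Min,\Max)$--subcomplex, which is what the bifiltration claim amounts to.

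Next I would address the homology. The assertion $H_*(\cfki) \cong \calt_{E}$ is the classical identification of $H_*(\cfki)$ with $\hf^\infty(S^3) \cong \F[U,U^{-1}]$, whose generator sits in grading $0$; hence the entire tower lies in even grading. The decomposition $H_*(\icfk^\infty(K)) \cong \calt_{O} \oplus \calt_{E}$ is precisely the content of the theorem proved at the beginning of Section~\ref{sec:involve}: the long exact sequence of the mapping cone of $\involmap + I$ splits because $(\involmap + I)_*$ vanishes on homology, and the shift in the cone construction places one $\F[U,U^{-1}]$--summand in odd grading (the generator coming from $\cfki[1]$) and the other in even grading (the generator coming from $\cfki$).

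I do not anticipate a genuine obstacle; this theorem is a bookkeeping summary of structures already established. The only point with any bite is the compatibility of $\involmap$ with the folded bifiltration, and that is automatic once one observes that $\involmap$ is skew with respect to $(\Alg,\Alex)$, which is exactly why the Min-Max folding was introduced in Definition~\ref{def:folding}.
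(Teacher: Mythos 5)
Your argument assembles exactly the ingredients the paper intends: the theorem is stated there with no proof of its own (``In summary we have the following general result''), the bifiltration claim being the observation from the folded-bifiltration subsection of Section~\ref{sec:involve} that $\involmap$ swaps $\Alg$ and $\Alex$ and hence respects $\Min$ and $\Max$, and the homology claim being the mapping-cone theorem proved at the start of that section together with $H_*(\cfki)\cong \F[U,U^{-1}]$ supported in even gradings. So in substance and route you agree with the paper. One step as you wrote it is false, however: the pointwise minimum of two functions satisfying (F1)--(F3) need not satisfy (F2). If $x$ has $(\Alg,\Alex)=(0,n)$ and $y$ has $(n,0)$ (as happens for a staircase generator and its reflection), then $\Min(x)=\Min(y)=0$ while $\Min(x+y)$ is typically $n$, violating $\Min(x+y)\le\max\{\Min(x),\Min(y)\}$. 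The conclusion survives because the filtration is defined by taking the subcomplex at level $(i,j)$ to be the linear \emph{span} of all elements with $\Min\le i$ and $\Max\le j$; this span is closed under $\partial$ by (F3) for $\Alg$ and $\Alex$ separately, which is all one needs, and it is the convention the paper adopts in Definition~\ref{def:nu} (the paper's Definition~\ref{def:folding} shares the same imprecision). Similarly, skew-filteredness gives only the inequalities $\Min(\involmap x)\le\Min(x)$ and $\Max(\involmap x)\le\Max(x)$, not the equalities you assert, but the inequalities suffice for $\partial_\involmap$ to preserve the folded subcomplexes.
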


\begin{definition}\label{def:nu}
For each $t \in [0,2]$, let  $\deg_t$ be the function on either $\cfk^\infty(K)$ or $\icfk^\infty(K)$ given by $$\deg_t(x) = \frac{t}{2} \Max(x) + (1-\frac{t}{2} )\Min(x).$$
The image of $\deg_t(x)$ as $x$ ranges over all (nonzero) elements of $\cfkinv$ is some discrete subset $S\subset\R$; for each $s$ we have a subcomplex $\cfkinv_{\deg_t\leq s}\subset \cfkinv$  which is the $\F$--span of all elements $x$  for which  $\deg_t(x) \le s$.  We refer to this as the slope $1-\frac{t}{2}$ filtration of $\cfkinv$ (or of $\cfk^\infty(K)$).   Note that for all $t$ and $s$, $ \cfkinv_{\deg_t\leq s}$ is an $\ff[U]$--module.  Similar statements hold for $\cfk^\infty(K)$.  Note also that $\deg_t$ induces  filtrations on $\hfk^\infty(K) $ and $\hfki^\infty(K)$.

\end{definition}

Recall that we have a short exact sequence of bifiltered complexes
$$
0\rightarrow \cfk^\infty(K) \buildrel \sigma\over \rightarrow \cfkinv \buildrel \pi\over \rightarrow \cfk^\infty(K)[\text{1}]\rightarrow 0.
$$
The connecting homomorphism for the associated long exact sequence is induced by the map $\involmap  + I$, and since $\involmap$ induces the identity map in homology, the connecting map is zero.  Thus the long exact sequence in homology splits into a collection of short exact sequences
\[
0\rightarrow \hfk^\infty_i(K)\buildrel \sigma_\ast\over \rightarrow \hfki^\infty_i(K) \buildrel \pi_\ast\over \rightarrow \hfk^\infty_{i-1}(K)\rightarrow 0
\]
for all $i$.  Since $\hfk^\infty(K)$ is supported in even homological degrees, it follows that $\sigma_\ast$ is an isomorphism $ \hfk^\infty_{0}(K) \to \hfki^\infty_{0}(K)$ and $\pi_\ast$ is an isomorphism $\hfki^\infty_{1}(K)\to \hfk^\infty_{0}(K)$.

Let $c_0 \in \hfki_0(K)$,  $c_1 \in \hfki^\infty_1(K)$, and $d \in \hfk^\infty_0(K)$ denote generators.   It follows  that $c_0=\sigma_\ast(d)$ and $d=\pi_\ast(c_1)$.  We now define three   functions of $t$:

$$
\nu^f_t(K) := \deg_t(d)
$$
$$
\overline{\nu}_t(K) = \deg_t(c_0)
$$
$$
\underline{\nu}_t(K) = \deg_t(c_1)
$$
Here the superscript on $\nu^f_t(K)$   indicates that we are using the folded bifiltration on $\hfk^\infty(K)$. Using these, we define two involutive Upsilon functions and a folded Upsilon function.

\begin{definition} $ \ $ 
\begin{center}$ \overline{\Upsilon}_t(K) = -2 \overline{\nu}_t(K)$ \hskip.5in
$ {\Upsilon}^f_t(K) = -2  {\nu}^f_t(K)$ \hskip.5in
$ \underline{\Upsilon}_t(K)  = -2   \underline{\nu}_t(K) $
\end{center}
\end{definition}

Our conventions for ``upper bar'' and ``lower bar'' where chosen so that the following inequalities hold:
\begin{proposition}\label{prop:upsilonOrder}

$\underline{\Upsilon}_t(K) \le  
{\Upsilon}^f_t(K)  \le 
\overline{\Upsilon}_t(K) .    $

\end{proposition}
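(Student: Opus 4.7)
The plan is to reduce Proposition \ref{prop:upsilonOrder} to two elementary inequalities for filtration degrees of homology classes, using only the fact that the maps $\sigma$ and $\pi$ in the short exact sequence are bifiltered with respect to the Min--Max bifiltration.

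First, I would rewrite the claim. Since $\Upsilon_t = -2\nu_t$ on the nose for each flavor, multiplying by $-2$ flips the inequalities, so the desired inequality is equivalent to
\[
\overline{\nu}_t(K) \le \nu^f_t(K) \le \underline{\nu}_t(K).
\]
The representatives are related by $c_0=\sigma_\ast(d)$ and $d=\pi_\ast(c_1)$, so the two inequalities will follow once I know $\sigma_\ast$ and $\pi_\ast$ do not increase $\deg_t$-degrees on homology.

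Next, I would record the general principle that does the work. For any filtered chain map $f\colon (\com,\deg_\FC)\to (\com',\deg_\FC')$ and any homology class $[z]\in H_\ast(\com)$, choosing a cycle $z$ with $\deg_\FC(z)=\deg_\FC([z])$ gives a representative $f(z)$ of $f_\ast([z])$ with $\deg_\FC'(f(z))\le \deg_\FC(z)$, whence $\deg_\FC'(f_\ast[z])\le \deg_\FC([z])$. Applied to $\deg_t$ (which is a positive linear combination of $\Min$ and $\Max$ for $t\in[0,2]$), this says: any Min--Max-bifiltered chain map induces a $\deg_t$-filtered map on homology.

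Then I would verify that $\sigma$ and $\pi$ are Min--Max-bifiltered. Recall $\cfkinv=\cfki[1]\oplus \cfki$ as a graded vector space, with $\sigma$ the inclusion of the second summand and $\pi$ the projection to the first; the Min--Max bifiltration on $\cfkinv$ is, by construction of Section~\ref{sec:involve}, the one induced summandwise from $(\cfki,\Min,\Max)$. Both $\sigma$ and $\pi$ are therefore filtration-preserving. Applying the principle above gives $\deg_t(\sigma_\ast(d))\le \deg_t(d)$ and $\deg_t(\pi_\ast(c_1))\le \deg_t(c_1)$, i.e.\ $\overline{\nu}_t(K)\le \nu^f_t(K)$ and $\nu^f_t(K)\le \underline{\nu}_t(K)$, respectively. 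Multiplying by $-2$ yields the proposition.

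The argument is almost purely formal, so there is no real obstacle; the only thing to be careful about is the sign convention (checking that $\Upsilon=-2\nu$ really does reverse inequalities, and that the Min--Max bifiltration is used consistently on $\cfki$ and $\cfkinv$ so that $\sigma$ and $\pi$ are genuinely filtration-preserving and not merely skew-filtered).
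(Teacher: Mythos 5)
Your proposal is correct and follows essentially the same route as the paper: both reduce the claim to $\overline{\nu}_t(K)\le \nu^f_t(K)\le \underline{\nu}_t(K)$ via the identities $c_0=\sigma_\ast(d)$ and $d=\pi_\ast(c_1)$, together with the fact that filtration-preserving chain maps do not increase filtration degree on homology. The only difference is that you spell out the verification that $\sigma$ and $\pi$ preserve the Min--Max filtration, which the paper leaves implicit.
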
\label{prop:inequal}
\begin{proof}
If $(\com,\deg_\FC)$ and $(\comd,\deg_\FC)$ are filtered vector spaces, then any filtration preserving map $g\co\thinspace \com\rightarrow \comd$ satisfies $\deg_\FC(g(x))\leq \deg_\FC(x)$ by definition.  Thus, the inequalities $\overline{\nu}_t(K)\leq \nu^f_t(K)\leq \underline{\nu}_t(K)$ follow immediately from the fact that $c_0=\sigma_\ast(d)$ and $d=\pi_\ast(c_1)$, discussed in the remarks preceding the proposition.  This proves $\underline{\Upsilon}_t(K) \le   {\Upsilon}^f_t(K)  \le   \overline{\Upsilon}_t(K)$, as claimed.
\end{proof}

\section{Example: $T(3,7)$}\label{sec:t37}  In Figure~\ref{up37} we have redrawn the fully reduced complex $\icfk^\infty(T(3,7))$.
\begin{figure}[h]
\fig{.35}{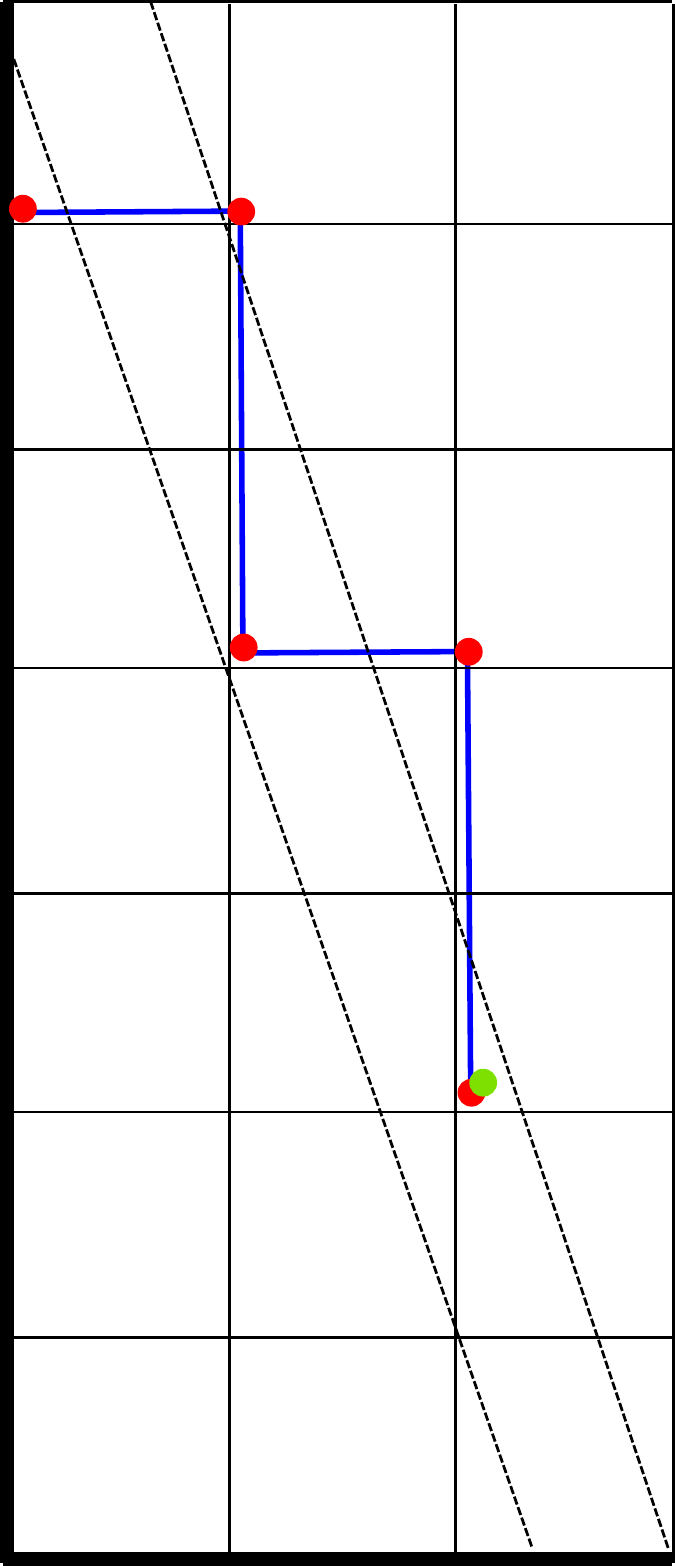}
\caption{$\icfk^\infty(T(3,7))$ simplified}
\label{up37}
\end{figure}The figure    includes two dashed lines of slope $-3$  corresponding to filtration levels when $t = 1/2$.  The lower line is the boundary of the region

$$ \frac{t}{2} \Min(x) + (1-\frac{t}{2} )\Max(x)\le \frac{7}{4}. $$
The upper line is the boundary of the region
$$ \frac{t}{2} \Min(x) + (1-\frac{t}{2} )\Max(x)\le \frac{9}{4}. $$

Continuing to work with $t  = 1/2$, the least value of $s$ so that the region  $\frac{t}{2} \Min(x) + (1-\frac{t}{2} )\Max(x)\le    s$ contains a generator of homology at grading 0 is $s = 3 /2$.  The   least value of $s$ so that the region  $\frac{t}{2} \Min(x) + (1-\frac{t}{2} )\Max(x)\le   s$ contains a generator of homology at grading 1 is $s = 2  $.  Thus,  $ \overline{\Upsilon}_{1/2}(K) = -3   $ and $ \underline{\Upsilon}_{1/2}(K) = -4$.

For general $t$ we have 

$$\overline {\Upsilon}_{t}(K) = 
\begin{cases}
-6t, &\text{\ if\ } 0\le  t\le \frac{2}{3}\\
-4, &\text{\ if\ }   \frac{2}{3}\le t \le 2,\\

\end{cases}$$

$$ \underline{\Upsilon}_{t}(K) = -4  \text{\ for all\ } t \in [0,2].$$


\section{Concordance invariance and knot inverses.}\label{sec:concordance}
\subsection{Concordance} 
\begin{theorem}
If $K$ and $J$ are concordant knots, then $ \overline {\Upsilon}_{t}(K) =   \overline {\Upsilon}_{t}(J)$ and $ \underline {\Upsilon}_{t}(K) =    \underline {\Upsilon}_{t}(J)$.

\end{theorem}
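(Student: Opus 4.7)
The plan is to reduce concordance invariance of $\overline{\Upsilon}_t$ and $\underline{\Upsilon}_t$ to the existence of a bifiltered \emph{local equivalence} between the $\involmap$--complexes $(\cfk^\infty(K), \involmap_K)$ and $(\cfk^\infty(J), \involmap_J)$ arising from a concordance. Concretely, one needs chain maps $F\co \cfk^\infty(K) \to \cfk^\infty(J)$ and $G\co \cfk^\infty(J) \to \cfk^\infty(K)$ that are bifiltered with respect to both $\alg$ and $\Alex$, that carry the generator of $\hfk_0^\infty(K)$ to that of $\hfk_0^\infty(J)$ on $U$--inverted homology, and that satisfy $\involmap_J F + F \involmap_K = \partial H_F + H_F \partial$ and $\involmap_K G + G \involmap_J = \partial H_G + H_G \partial$ for some bifiltered chain homotopies $H_F$ and $H_G$.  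This data is obtained from the cobordism maps associated to the concordance cylinder and its reverse, together with the naturality of $\involmap$ established in \cite{jt, hm}.

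First I would check that each pair $(F, H_F)$ and $(G, H_G)$ assembles into a $(\Min,\Max)$--bifiltered chain map $\widetilde{F}\co \icfk^\infty(K) \to \icfk^\infty(J)$ and $\widetilde{G}\co \icfk^\infty(J) \to \icfk^\infty(K)$.  Using the formula for $\partial_\involmap$ from Section~\ref{sec:involve}, the map $\widetilde{F}(z_1, z_2) := (F z_1, F z_2 + H_F z_1)$ is a chain map precisely by virtue of the homotopy relation above.  Since each of $F$ and $H_F$ preserves both $\alg$ and $\Alex$, each automatically preserves $\Min$ and $\Max$, so $\widetilde{F}$ is $(\Min, \Max)$--bifiltered; the same holds for $\widetilde{G}$.

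Next I would trace the generators $d \in \hfk_0^\infty(K)$, $c_0 \in \hfki_0(K)$, and $c_1 \in \hfki^\infty_1(K)$ through $\widetilde{F}_\ast$ and $\widetilde{G}_\ast$.  The short exact sequences for $K$ and $J$ fit into a commuting ladder with vertical arrows $F_\ast$ and $\widetilde{F}_\ast$, so since local equivalence forces $F_\ast(d_K) = d_J$ on $\hfk_0^\infty$, a diagram chase gives $\widetilde{F}_\ast(c_0^K) = c_0^J$ and $\widetilde{F}_\ast(c_1^K) = c_1^J$ (up to nonzero $U$--multiples, which are irrelevant since these elements generate towers).  Filtration-preservingness of $\widetilde{F}$ then yields $\overline{\nu}_t(J) \le \overline{\nu}_t(K)$ and $\underline{\nu}_t(J) \le \underline{\nu}_t(K)$; running the same argument with $\widetilde{G}$ yields the reverse inequalities, and the claimed equality of $\overline{\Upsilon}_t$ and $\underline{\Upsilon}_t$ follows.

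The hard part is establishing that $F$ and the chain homotopy $H_F$ can be simultaneously chosen to be bifiltered in the original $(\alg, \Alex)$ sense.  This is the involutive refinement of the standard concordance-invariance argument for $\Upsilon$ from~\cite{oss}; one expects it to follow by applying Hendricks-Manolescu's bifiltered naturality argument to the concordance cylinder (viewed with both basepoints) rather than to a generic four-manifold, but some diagrammatic care is needed to produce the naturality homotopies in bifiltered form.
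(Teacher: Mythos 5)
Your approach is genuinely different from the paper's, and it also has a gap at its central step. The paper gives a purely algebraic argument with no cobordism maps at all: it invokes the Hendricks--Hom theorem that the complex of a slice knot splits as $\calt\oplus\cala$ \emph{as involutive complexes} (with $\cala$ acyclic), applies this to $K\cs(-J)$ and to $(-J)\cs J$, and uses the Hendricks--Manolescu--Zemke connected sum formula to conclude that $\cfk^\infty(K)\oplus\cala_3\cong\cfk^\infty(J)\oplus\cala_2$ as involutive complexes; since acyclic summands cannot carry the towers, they do not affect $\overline{\nu}_t$ or $\underline{\nu}_t$. Your route instead asks for a bifiltered local equivalence induced directly by the concordance cylinder, assembled into maps of mapping cones, followed by a diagram chase. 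The formal parts of that chase are fine: $\widetilde{F}(z_1,z_2)=(Fz_1,\,Fz_2+H_Fz_1)$ is a chain map on the cones exactly when $H_F$ is a homotopy between $\involmap_J F$ and $F\involmap_K$; preservation of $\alg$ and $\Alex$ does imply preservation of $\Min$ and $\Max$; and the commuting ladder correctly propagates $F_\ast(d_K)=d_J$ to $c_0$ and $c_1$. (One small caution: the parenthetical ``up to nonzero $U$--multiples'' is not harmless, since multiplication by $U$ shifts $\deg_t$; you need grading-preservation of the concordance-induced map to pin down $F_\ast(d_K)=d_J$ on the nose.)

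The genuine gap is the step you explicitly defer: that a concordance induces $F$, $G$, $H_F$, $H_G$ that are simultaneously bifiltered for $(\alg,\Alex)$ and intertwine the involutions up to bifiltered homotopy. This is not a routine extension of the Hendricks--Manolescu naturality argument to the concordance cylinder; it is substantive content of Zemke's work on concordance and connected-sum maps in involutive knot Floer homology, and it is precisely the input behind the Hendricks--Hom splitting theorem that the paper cites instead. As written, your proof is incomplete at that point. If you are willing to quote the local-equivalence statement from Hendricks--Hom or Zemke as a black box, the rest of your argument goes through and yields an alternative proof; the paper's route buys a shorter argument by pushing all of the geometric input into two citations and keeping the remaining work at the level of direct-sum decompositions of involutive complexes.
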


\begin{proof} A result of Hendricks and Hom~\cite{h_h} states that  if $L$ is a slice knot, then $\cfk^\infty(L)$ splits as the direct sum of involutive complexes:  $$\cfk^\infty(L) \cong \calt \oplus \cala,$$ where $\calt \cong \F[U,U^{-1}]$ and $\cala$ is acyclic.   (This result generalizes an analogous  result of Hom~\cite{hom2} that holds for noninvolutive complexes.  The proof depended on results of  Zemke~\cite{zemke} concerning the involutive homology of connected sums of knots.)      Thus, we can write 
$$   \cfk^\infty(K \cs -J) \cong  \calt_1  \oplus \cala_1$$ as involutive complexes, where $\calt_1 \cong \F[U,U^{-1}]$  and   $\cala_1$ is  acyclic.
It follows that  $$\cfk^\infty(K \cs {-J} \cs J) \cong \cfk^\infty(J) \oplus (\cala_1 \otimes \cfk^\infty(J))$$  and according to a connected sum formula for involutive homology given in~\cite{hmz}, this is again a direct sum of involutive complexes (but the involution on $\cala_1 \otimes \cfk^\infty(J)$ is not necessarily  the tensor product of the involutions; see~\cite{zemke} for details).  Since the second summand is acyclic, we write 
$$\cfk^\infty(K \cs {-J} \cs J) \cong \cfk^\infty(J) \oplus \cala_2.$$

Next, we   write 
$$\cfk^\infty(K \cs {-J} \cs J) \cong \cfk^\infty(K )\otimes \cfk^\infty( {-J} \cs J),$$ which can be rewritten as 
$$\cfk^\infty(K \cs {-J} \cs J) \cong \cfk^\infty(K )\otimes  (\calt_2  \oplus \cala_2),$$  since $J \cs -J$ is slice.  As before,   $\calt_2  \oplus \cala_2$ is a direct sum of involutive complexes,   
$$\cfk^\infty(K \cs {-J} \cs J) \cong \cfk^\infty(K ) \oplus   \cala_3.$$  In summary we have the following decompositions of involutive complexes:
$$\cfk^\infty(K) \oplus \cala_3\cong \cfk^\infty(J ) \oplus   \cala_2.$$ 

The acyclic summands do not affect the value of either $  \overline {\Upsilon}_{t}(K) $ or  $\underline {\Upsilon}_{t}(K)$, and thus the proof is complete. 
\end{proof}

\section{The knot concordance invariants $\overline{V}_0(K)$ and $\underline{V}_0(K)$}\label{sec:v0}
In~\cite{hm}, two knot concordance invariants $\overline{V}_0(K)$ and $\underline{V}_0(K)$ are defined.  These can be interpreted in terms of $\Upsilon$.

\begin{theorem}
$$\overline{V}_0(K) =  -\frac{1}{2}\overline{\Upsilon}_2(K),$$
$$\underline{V}_0(K) =  -\frac{1}{2}\underline{\Upsilon}_2(K).$$

\end{theorem}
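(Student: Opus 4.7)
The plan is to reduce to a filtration-degree computation and then identify the resulting quantities with $\overline{V}_0$ and $\underline{V}_0$ via the involutive large surgery formula.  To begin, observe that at $t=2$ Definition~\ref{def:nu} gives $\deg_2(x) = \Max(x)$, so that
$$\overline{\nu}_2(K) = \deg_2(c_0) = \Max(c_0), \qquad \underline{\nu}_2(K) = \deg_2(c_1) = \Max(c_1),$$
where $\Max(c_i)$ denotes the minimum of $\Max$ over cycle representatives of $c_i$.  Since $\overline{\Upsilon}_t(K) = -2\overline{\nu}_t(K)$ and $\underline{\Upsilon}_t(K) = -2\underline{\nu}_t(K)$, the theorem is equivalent to the two identities
$$\overline{V}_0(K) = \Max(c_0), \qquad \underline{V}_0(K) = \Max(c_1).$$

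As a warm-up I would first recall the non-involutive analog.  The Ozsv\'ath-Szab\'o invariant $V_0(K)$ equals half of the $U$-grading offset of the tower in $H_\ast(A^-_0)$, where $A^-_0$ is the large surgery complex and is chain homotopy equivalent to $\cfk^\infty(K)\{\Max \le 0\}$.  Unwinding the definitions gives $V_0(K) = \Max(d)$ for the non-involutive tower generator $d \in \hfk^\infty_0(K)$: the smallest $k \ge 0$ such that $U^k d$ admits a cycle representative in the $\Max \le 0$ subcomplex is exactly $\Max(d)$, and $U^k d$ sits in grading $-2k$.

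The main step is to run the analogous argument in the involutive setting.  The Hendricks-Manolescu invariants $\overline{V}_0$ and $\underline{V}_0$ are defined through the involutive large surgery formula, where the relevant complex is the involutive analog of $A^-_0$.  I plan to identify this complex with the subcomplex $\icfk^\infty(K)\{\Max \le 0\}$ of our mapping cone, so that its homology contains shifted copies of $\calt_e$ and $\calt_o$ whose $U$-grading offsets are precisely $\overline{V}_0(K)$ and $\underline{V}_0(K)$.  The reasoning from the non-involutive case then delivers the desired identities.

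The main obstacle is precisely this matching step.  One must verify that the restriction of $\involmap$ to $\cfk^\infty(K)\{\Max \le 0\}$ really does model the involution used by Hendricks-Manolescu on large surgeries, and that under this identification the odd and even towers of $\hfki^\infty$ are paired with $\underline{V}_0$ and $\overline{V}_0$ in the correct order — a parity check against the grading conventions $c_0 \in \hfki^\infty_0(K)$ and $c_1 \in \hfki^\infty_1(K)$.  Once this identification is in hand, both equalities of the theorem follow at once.
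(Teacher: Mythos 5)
Your proposal is correct and follows essentially the same route as the paper: both arguments rest on the Hendricks--Manolescu large surgery formula identifying $\overline{V}_0(K)$ and $\underline{V}_0(K)$ with minus one half the maximal gradings of even and odd non-torsion classes in $H_*(\icfk^\infty(K)_{\Max\le 0})$, and then translate this into the minimal $\Max$-filtration level (which is $\deg_2$) of the tower generators $c_0$ and $c_1$. The only difference is presentational: the identification you flag as the main obstacle is handled in the paper by a direct citation of Formula 1 of Hendricks--Manolescu.
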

\begin{proof}
Both $ \overline{V}_0(K)  $ and $\underline{V}_0(K)$  are defined in terms of the involutive correction terms  for large surgery on $K$:  $\overline{d}(S^3_p(K), [0])$ and 
$\underline{d}(S^3_p(K), [0])$.  These are computed in terms of  the maximal gradings of even and odd non-torsion elements in   the homology of $\icfk^\infty(K)_{\text{Max}\{i,j\} \le 0}$.    More precisely, they are minus one half these gradings.

Suppose the maximal grading of a (non-torsion) class in $\icfk^\infty(K)_{\text{Max}\{i,j\} \le 0}$ of even grading is $a$.  Then it follows from Formula 1 of~\cite{hm} that  $\overline{V}_0(K) = -a/2$.  As a consequence, the involutive complex  $\icfk^\infty(K)_{\text{Max}\{i,j\} \le s}$ contains a non-torsion class of grading 0 if and only if $s \ge -a/2$.  Thus  $\overline{\nu}_2(K) = -a/2$.  It now follows that $  \overline{\Upsilon}_2(K) =  a$, as desired.  A  similar argument works for the lower invariants.

\end{proof}

\section{Three-genus} \label{sec:three-genus} The three-genus bounds that arises from  $  \overline{\Upsilon}_K$ and $ \underline{\Upsilon}_K$ are almost immediate, following in the same way as   the lower bounds on $g_4(K)$ coming from $\Upsilon_K$:  $g_4(K) \ge \Upsilon'_K(t)$ for all $t\in [0,2]$ at which $\Upsilon_K(t)$ is nonsingular.  The  proof of this inequality   uses only the fact that $\cfk^\infty(K)$ is chain homotopy equivalent to complex for which all filtration levels satisfy $\big| \alg - \Alex \big| \le g_3(K)$ (see~\cite{oss}, or the expository account in~\cite{livingston}). The same constraint holds  for the involutive complex with the Max-Min filtration, so the same proof applies. 


\end{document}